\def\b{\mathbf{b}}
\def\r{\mathbf{r}}
\def\q{\mathbf{q}}
\def\N{\mathbb{N}}
\tiny\color{gray},
\newtheorem{lemma}{\hspace*{\parindent}Lemma}
\newtheorem{theorem}{\hspace*{\parindent}Theorem}
\newtheorem{corollary}{\hspace*{\parindent}Corollary}
\title{Hypergeometric ${}_4F_3(1)$ with integral parameter differences}
\author{D.B.\:Karp$^{\rm a}$\footnote{Corresponding author. E-mail: D. Karp -- \emph{dimkrp@gmail.com},
E.\:Prilepkina --  \emph{pril-elena@yandex.ru}}~~and
E.G.\:Prilepkina$^{\rm a,b}$
\\[10pt]
\small{\textit{$\phantom{1}^a$Holon Institute of Technology, Holon, Israel}}
\\
\small{\textit{$\phantom{1}^b$Far Eastern Federal University, Ajax Bay~10, 690922 Vladivostok, Russia}}
\\
\small{\textit{$\phantom{1}^c$Institute of Applied Mathematics,
FEBRAS, 7 Radio Street, Vladivostok,  690041, Russia}}}
\date{}
\date{}
\begin{document}
\maketitle

\begin{abstract} 
In this  paper we continue investigation of the hypergeometric function ${}_4F_3(1)$ as the function of its seven parameters.  We deduce several reduction formulas for this function under additional conditions that one of the top parameters exceeds one of the bottom parameters by a positive integer or, conversely, one of the bottom parameters exceeds one of the top parameters by a positive integer or both.  We show that all such cases reduce to the case of the unit parameter difference. The latter case, in turn, can  be expressed in terms of certain linear combination of two series involving the logarithmic derivative of the gamma function. 
\end{abstract}

\bigskip

Keywords: \emph{generalized hypergeometric function, hypergeometric identity, integral parameter difference, summation formula}

\bigskip

MSC2010: 33C20

\bigskip

\section{Introduction}

The generalized hypergeometric functions ${}_pF_q(a_1,\ldots,a_p; b_1,\ldots, b_q;z)$ \cite[Chapter~3]{AAR},\cite{Bailey} occur in a wide variety of problems in theoretical physics, applied mathematics, statistics and engineering sciences, let along the pure mathematics itself.  In particular, the functions  ${}_3F_2$ and ${}_4F_3$ evaluated at the unit argument are related to the Clebsch-Gordan and Racah coefficients, respectively, see \cite[sections~8.2.5 and 9.2.3]{VMK} and \cite{KrRao2004,RaoDobermann}. The celebrated summation formula due to Minton \cite{Minton} for the generalized hypergeometric series with integral parameter differences and it generalization by Karlsson \cite{Karlsson} motivated a stream of works dedicated to this type of hypergeometric series. Extensions in many directions were found, including a $q$-analogue and a generalization of Minton's and Karlsson's formulas due to Gasper \cite{Gasper}; bilateral hypergeometric and $q$-hypergeometric series due to Chu \cite{Chu94,Chu95}; these results were re-derived by simpler means and further generalized by Schlosser \cite{Schlosser2002}, who also found multidimensional extensions to hypergeometric functions associated with root systems \cite{Schlosser2001}.

In a series of papers \cite{KPITSF2018,KPREsults2019,KPSpringer2020,KPG} we studied transformation and summation formulas for the generalized hypergeometric functions with integral parameter differences.  In particular, in \cite{KPG} we gave a complete description of  the group of two-term relations for the function 
\begin{equation}\label{eq:4F3unitshift}
{}_4F_3\!\left(\!\!\begin{array}{c}
    a,b,c,f+1\\
    d,e,f
\end{array}\!\!\right):={}_4F_3\!\left(\!\!\left.\begin{array}{c}
    a,b,c,f+1\\
    d,e,f
\end{array}\right| 1\right)
\end{equation}
(we will routinely omit argument $1$ from the notation of the hypergeometric function).  In this paper we will investigate the function ${}_4F_3(1)$ containing the parameter pair  $\Bigg[\!\!\begin{array}{c}f+m\\[-2pt]f\end{array}\!\!\Bigg]$ (known as the positive integral parameter difference) and/or $\Bigg[\!\begin{array}{c}b\\[-2pt]b+n\end{array}\!\Bigg]$ (known as the negative integral parameter difference) for arbitrary positive integers $m$, $n$. Our main result states that each function containing both pairs can be reduced to a single  ${}_3F_2(1)$ containing the parameter pair $\Bigg[\!\begin{array}{c}b\\[-2pt]b+1\end{array}\!\Bigg]$ modulo rational and gamma functions.   Via Thomae's transformation one can use  ${}_3F_2(1)$ with one top parameter equal to unity instead.  
This seems to be generally accepted (although we have not seen a formal proof) that this type of ${}_3F_2(1)$ is not summable in terms of gamma functions. This and related problems have been studied recently by various authors, see, for instance, \cite{Chen,EbisuIwasaki,SS}.   Combined with known summation formulas our results lead to new summability cases for ${}_4F_3(1)$ in terms of gamma functions.  We further demonstrate that that certain type of series containing the logarithmic derivative $\psi(x)$ of the gamma function reduces to the same type of ${}_3F_2(1)$ and, hence,  general ${}_4F_3(1)$ with one positive and one negative parameter difference reduces to this type of $\psi$-series. This paper is organized as follows: in the following Section~2 we treat the case of one  positive parameter difference; Section~3 is devoted to negative parameter difference as well as combination of one positive and one negative parameter difference; concluding Section~4 reveals the relation with $\psi$-series.

\section{Positive parameter difference}
Before formulating our first proposition, let us fix the following convention that will be valid throughout the paper: \emph{in all our statements involving infinite series we assume the parameters are chosen so that all such series converge and, furthermore, the rational and gamma functions appearing in the formulas do not become infinite}.  
Such parameter ranges are not empty in all our statements as can be immediately seen from the well-known convergence properties of the hypergeometric series.  

Our first lemma is the key ingredient in reducing the ${}_4F_3(1)$ with an arbitrary positive parameter difference to the case of the unit shift.
\begin{lemma}\label{lem1}
The following reduction formula holds\emph{:} 
\begin{equation}\label{eq:plus2}
{}_4F_3\!\left(\!\!\begin{array}{c}
    a,b,c,f+2\\
    d,e,f
\end{array}\!\right)=\left(1+\frac{abc}{(s-1)f(f+1)}\right){}_4F_3\!\left(\!\!\begin{array}{c}
    a,b,c,\mu+1\\
    d,e,\mu
\end{array}\!\right),
\end{equation} 
where $s=d+e-a-b-c-1$  and
\begin{equation}\label{eq:mu2}
\mu=\frac{abc+(s-1)f(f+1)}{ab+ac+bc-de+d+e+(s-1)(2f+1)-1}.
\end{equation}
\end{lemma}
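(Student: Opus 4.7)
I would unfold both sides as linear combinations of three ``moment sums'' of the underlying $_3F_2$ series and then reduce the problem to a single three-term identity among those moments.

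Writing $t_k:=(a)_k(b)_k(c)_k/[(d)_k(e)_k k!]$ and $F_j:=\sum_{k\ge 0}k^jt_k$ for $j=0,1,2$, the identities
\[
\frac{(f+2)_k}{(f)_k}=\frac{(f+k)(f+k+1)}{f(f+1)}=\frac{k^2+(2f+1)k+f(f+1)}{f(f+1)},\qquad
\frac{(\mu+1)_k}{(\mu)_k}=1+\frac{k}{\mu}
\]
immediately give
\[
\text{LHS}=F_0+\frac{(2f+1)F_1+F_2}{f(f+1)},\qquad
{}_4F_3\!\left(\!\!\begin{array}{c}a,b,c,\mu+1\\d,e,\mu\end{array}\!\right)=F_0+\frac{F_1}{\mu}.
\]
So the claim reduces to an algebraic matching: for the advertised $\mu$ and the factor $A:=1+abc/[(s-1)f(f+1)]$ we must show
\[
\frac{F_2}{f(f+1)}=(A-1)F_0+\left(\frac{A}{\mu}-\frac{2f+1}{f(f+1)}\right)F_1.
\]

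The heart of the argument is a three-term relation among $F_0,F_1,F_2$. The telescoping identity $(k+1)(d+k)(e+k)t_{k+1}=(a+k)(b+k)(c+k)t_k$, summed over $k\ge 0$ after shifting the index on the left, produces
\[
\sum_{k\ge 0}k(d+k-1)(e+k-1)t_k=\sum_{k\ge 0}(a+k)(b+k)(c+k)t_k.
\]
Expanding both cubics in $k$, the $k^3$-terms cancel and one is left with the clean relation
\[
(s-1)F_2=abc\,F_0+\bigl(ab+ac+bc-de+d+e-1\bigr)F_1,
\]
using $s-1=d+e-a-b-c-2$. This is the key step; it is the only place where the analytic content of the series enters, and it is what makes a reduction of the shift $f+2\mapsto f$ to a single unit-shift $\mu+1\mapsto\mu$ possible at all.

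Once this three-term relation is in hand, I would substitute it into the required identity and check that the definition (\ref{eq:mu2}) of $\mu$ together with the prefactor $A$ makes both coefficients agree. Concretely, $f(f+1)(A-1)=abc/(s-1)$ handles the $F_0$-coefficient automatically, and equating the $F_1$-coefficients gives
\[
\mu=\frac{Af(f+1)(s-1)}{(ab+ac+bc-de+d+e-1)+(2f+1)(s-1)}=\frac{abc+(s-1)f(f+1)}{ab+ac+bc-de+d+e+(s-1)(2f+1)-1},
\]
which is exactly (\ref{eq:mu2}). The main (indeed essentially only) obstacle is spotting and deriving the three-term relation; everything else is bookkeeping, and the convergence provisos stated at the beginning of Section~2 are precisely what is needed to justify the formal manipulations.
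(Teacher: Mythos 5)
Your argument is correct, and it reaches the stated $\mu$ and prefactor by a genuinely different route from the paper. The paper first rewrites the left-hand side as a ${}_5F_4$ to split off a unit-shift ${}_4F_3$ plus a fully shifted ${}_4F_3\!\left(\begin{smallmatrix}a+1,b+1,c+1,f+2\\d+1,e+1,f+1\end{smallmatrix}\right)$, then invokes the transformation \cite[page~10 and formula (A2)]{KPG} (formula \eqref{eq:shift1} above) to pull that shifted series back to a unit-shift form with a new parameter $\eta$, and finally merges the two resulting linear-in-$n$ weights into one. You instead expand $(f+2)_k/(f)_k$ as a quadratic in $k$, introduce the moments $F_0,F_1,F_2$, and derive the contiguous relation $(s-1)F_2=abc\,F_0+(ab+ac+bc-de+d+e-1)F_1$ directly from the term-ratio telescoping; matching coefficients then forces \eqref{eq:mu2}. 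I have checked the expansion of $k(d+k-1)(e+k-1)-(a+k)(b+k)(c+k)$ and the subsequent coefficient matching, and both are right. Your version is more self-contained (it does not lean on the external transformation \eqref{eq:shift1}, whose content is essentially equivalent to your three-term relation), at the cost of not fitting into the transformation framework that the paper reuses in the induction of Theorem~\ref{th:1}. One small point worth tightening: when you ``sum over $k\ge0$ after shifting the index,'' the two sides individually involve $\sum k^3t_k$, which can diverge in part of the range where $F_0,F_1,F_2$ all converge; the clean fix is to subtract before summing, i.e.\ work with the partial sums $\sum_{k=0}^{N}\bigl[k(d+k-1)(e+k-1)-(a+k)(b+k)(c+k)\bigr]t_k=-(N+1)(d+N)(e+N)t_{N+1}$ and observe that the boundary term tends to zero exactly when $F_2$ converges.
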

\begin{proof} Directly from the definition of the hypergeometric series we get the expansion
\begin{multline}{\label{eq:ext1}}
{}_4F_3\!\left(\!\!\begin{array}{c}
    a,b,c,f+2\\
    d,e,f
\end{array}\!\right)={}_5F_4\!\left(\!\!\begin{array}{c}
    a,b,c,f+1,f+2\\
    d,e,f,f+1
\end{array}\!\!\right)={}_4F_3\!\left(\!\!\begin{array}{c}
    a,b,c,f+1\\
    d,e,f
\end{array}\!\!\right)
\\
+\dfrac{abc}{efd}{}_4F_3\!\left(\!\!\begin{array}{c}
    a+1,b+1,c+1,f+2\\
    d+1,e+1,f+1
\end{array}\!\!\right).
\end{multline}
Application of the transformations \cite[page~10 and formula (A2)]{KPG} yields the following expression for the rightmost term: 
\begin{equation}\label{eq:shift1}
  {}_4F_3\!\left(\!\!\begin{array}{c}
    a+1,b+1,c+1,f+2\\
    e+1,d+1,f+1
\end{array}\!\!\right)=\frac{d e}{(s-1)(f+1)}{}_4F_3\!\left(\!\!\begin{array}{c}
    a,b,c,\eta+1\\
    d,e,\eta
\end{array}\!\!\right),  
\end{equation}
where
$$
\eta=\frac{abc}{ab+ac+bc-de+d+e+(s-1)f-1}.
$$
Substituting this back into \eqref{eq:ext1}, we will have 
\begin{equation*}
{}_4F_3\!\left(\!\!\begin{array}{c}
    a,b,c,f+2\\
    d,e,f
\end{array}\!\!\right)={}_4F_3\!\left(\!\!\begin{array}{c}
    a,b,c,f+1\\
    d,e,f
\end{array}\!\!\right)+\frac{abc}{(f+1)f(s-1)}{}_4F_3\!\left(\!\!\begin{array}{c}
    a,b,c,\eta+1\\
    d,e,\eta
\end{array}\!\!\right).
\end{equation*} 
The claim of the lemma results in expanding both terms and writing the sum as the single series in view of $(f+1)_n/(f)_n=1+n/f$.
\end{proof}

Denoting the coefficient in \eqref{eq:plus2} by
\begin{equation}\label{eq:W2}
W_2=W_2(a,b,c,d,e,f)=1+\frac{abc}{(s-1)f(f+1)}
\end{equation}
and writing $\mu_2=\mu(a,b,c,d,e,f)$ for $\mu$ from  \eqref{eq:mu2} we can rewrite \eqref{eq:plus2} in the form
\begin{equation}\label{eq:m2}
{}_4F_3\!\left(\!\!\begin{array}{c}
    a,b,c,f+2\\
    d,e,f
\end{array}\!\!\right) =W_2(a,b,c,d,e,f){}_4F_3\!\left(\!\!\begin{array}{c}
    a,b,c,\mu_2+1\\
    d,e,\mu_2
\end{array}\!\!\right).
\end{equation}
Iterating this formula we get the following identity for an arbitrary integer shift $m>0$.
\begin{theorem}\label{th:1} 
For each $m\in\N$ there exist rational functions $W_m=W_m(a,b,c,d,e,f)$ and $\mu_m=\mu_m(a,b,c,d,e,f)$ such that 
 $$
{}_4F_3\!\left(\!\!\begin{array}{c}
    a,b,c,f+m\\
    d,e,f
\end{array}\!\!\right)=W_m(a,b,c,d,e,f)\,{}_4F_3\!\left(\!\!\begin{array}{c}
    a,b,c,\mu_m+1\\
    d,e,\mu_m
\end{array}\!\!\right).
$$  
The functions $W_m=W_m(a,b,c,d,e,f)$ and $\mu_m=\mu_m(a,b,c,d,e,f)$ are computed recursively by 
$$
W_{m+1}=W_m+\frac{abc\nu_m}{(s-1)f\sigma_m},
$$
$$
\mu_{m+1}=W_{m+1}\left(\frac{W_m}{\mu_m}+\frac{abc\nu_m}{(s-1)f\sigma_m\eta_m}\right)^{-1},
$$
where
\begin{align*}
&\nu_m=W_m(a+1,b+1,c+1,d+1,e+1,f+1),
\\[3pt]
&\sigma_m=\mu_m(a+1,b+1,c+1,d+1,e+1,f+1),
\\[3pt]
&\eta_m=\frac{abc}{ab+ac+bc-de+d+e+(s-1)(\sigma_m-1)-1},
\end{align*}
and the initial values  $W_2$,  $\mu_2$ are given in \eqref{eq:mu2}, \eqref{eq:W2}.
\end{theorem}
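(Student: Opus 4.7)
My plan is a proof by induction on $m\geq 1$, with the tautological values $W_1=1$ and $\mu_1=f$ as the base case; one then checks that the stated recursion at $m=1$ reproduces $W_2$ and $\mu_2$ as in Lemma~\ref{lem1}. For the inductive step, the starting point is the generalization of \eqref{eq:ext1}
$$
{}_4F_3\!\left(\!\!\begin{array}{c} a,b,c,f+m+1\\ d,e,f \end{array}\!\!\right)
={}_4F_3\!\left(\!\!\begin{array}{c} a,b,c,f+m\\ d,e,f \end{array}\!\!\right)
+\frac{abc}{def}\,{}_4F_3\!\left(\!\!\begin{array}{c} a+1,b+1,c+1,f+m+1\\ d+1,e+1,f+1 \end{array}\!\!\right),
$$
which I would derive by using the ratio $(f+m+1)_n/(f+m)_n=1+n/(f+m)$ in the series for the left-hand side and then stripping one factor from each of $(a)_n,(b)_n,(c)_n,(d)_n,(e)_n,(f)_n$ in the resulting $n$-weighted summand.

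The crucial observation is that both ${}_4F_3$'s on the right-hand side have positive parameter difference equal to $m$: the first with parameters $(a,b,c,d,e,f)$, the second with them shifted to $(a+1,b+1,c+1,d+1,e+1,f+1)$. The inductive hypothesis therefore applies to each, producing
$$
W_m\,{}_4F_3\!\left(\!\!\begin{array}{c} a,b,c,\mu_m+1\\ d,e,\mu_m \end{array}\!\!\right)
+\frac{abc\,\nu_m}{def}\,{}_4F_3\!\left(\!\!\begin{array}{c} a+1,b+1,c+1,\sigma_m+1\\ d+1,e+1,\sigma_m \end{array}\!\!\right).
$$
I would then apply transformation \eqref{eq:shift1} to the second summand with $f$ replaced by $\sigma_m-1$. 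Since $s=d+e-a-b-c-1$ is independent of $f$, this substitution converts the shifted outer parameters back to $(a,b,c,d,e)$, generates the coefficient $de/((s-1)\sigma_m)$, and introduces precisely the auxiliary parameter $\eta_m$ of the statement; the two $de$ factors cancel $1/(de)$ inside $1/(def)$, leaving the overall weight $abc\,\nu_m/((s-1)f\sigma_m)$.

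To finish, I would merge the two resulting unit-shift series into a single one via $(\alpha+1)_n/(\alpha)_n=1+n/\alpha$ applied with $\alpha=\mu_m$ and $\alpha=\eta_m$. The constant-in-$n$ part of the bracket then yields $W_{m+1}=W_m+abc\,\nu_m/((s-1)f\sigma_m)$, and matching the linear-in-$n$ part against $W_{m+1}n/\mu_{m+1}$ forces the stated formula for $\mu_{m+1}$. The main obstacle is bookkeeping: one must carefully verify that the substitution $f\mapsto\sigma_m-1$ in \eqref{eq:shift1} really does reproduce the $\eta_m$ of the theorem, and that the coefficient algebra telescopes cleanly. Once the parameter substitutions are aligned, the recursions for $W_{m+1}$ and $\mu_{m+1}$ fall out immediately, and rationality is preserved at every step.
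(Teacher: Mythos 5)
Your proof is correct and follows essentially the same route as the paper's: the same series decomposition of ${}_4F_3(a,b,c,f+m+1;d,e,f)$, the same application of the induction hypothesis to both resulting terms (the second with all parameters shifted by one), the same use of \eqref{eq:shift1} with $f\mapsto\sigma_m-1$, and the same final merging of the two unit-shift series via $(\alpha+1)_n/(\alpha)_n=1+n/\alpha$. The only (harmless) difference is that you anchor the induction at $m=1$ with the tautological $W_1=1$, $\mu_1=f$ --- which indeed reproduces the $W_2$, $\mu_2$ of Lemma~\ref{lem1} through the stated recursion --- whereas the paper takes $m=2$ as the base case.
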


\begin{proof} 
We will use induction in $m$. The claim is true for $m=2$ according to \eqref{eq:m2}. Suppose that 
$$
{}_4F_3\!\left(\!\!\begin{array}{c}
    a,b,c,f+m\\
    d,e,f
\end{array}\!\!\right) =W_m(a,b,c,d,e,f){}_4F_3\!\left(\!\!\begin{array}{c}
    a,b,c,\mu_m+1\\
    d,e,\mu_m
\end{array}\!\!\right).
$$  
According to the definition of the hypergeometric series and using the property $(f+m+1)_n/(f+m)_n=1+n/(f+m)$ of the rising factorial, we will get 
\begin{multline}
{}_4F_3\!\left(\!\!\begin{array}{c}
    a,b,c,f+m+1\\
    d,e,f
\end{array}\right)={}_5F_4\!\left(\!\!\begin{array}{c}
    a,b,c,f+m,f+m+1\\
    d,e,f, f+m
\end{array}\!\!\right)
\\ 
={}_4F_3\!\left(\!\!\begin{array}{c}
    a,b,c,f+m\\
    d,e,f
\end{array}\!\!\right)
+\frac{abc}{def}{}_4F_3\!\left(\!\!\begin{array}{c}
    a+1,b+1,c+1,f+m+1\\
    d+1,e+1,f+1
\end{array}\!\!\right).
\end{multline}
Applying the induction hypothesis to the first term we will have
\begin{multline}\label{eq:decom}
{}_4F_3\!\left(\!\!\begin{array}{c}
    a,b,c,f+m+1\\
    d,e,f
\end{array}\!\!\right) =W_m\cdot{}_4F_3\!\left(\!\!\begin{array}{c}
    a,b,c,\mu_m+1\\
    d,e,\mu_m
\end{array}\!\!\right)
\\
+\frac{abc\nu_m}{def}{}_4F_3\!\left(\!\!\begin{array}{c}
    a+1,b+1,c+1,\sigma_m+1\\
    d+1,e+1,\sigma_m
\end{array}\!\!\right).
\end{multline}
On the other hand, setting $f=\sigma_m-1$ in \eqref{eq:shift1} yields 
$$
{}_4F_3\!\left(\!\!\begin{array}{c}
    a+1,b+1,c+1,\sigma_m+1\\
    e+1,d+1,\sigma_m
\end{array}\!\right)
=\frac{d e}{(s-1)\sigma_m}{}_4F_3\!\left(\!\!\begin{array}{c}
    a,b,c,\eta_m+1\\
    d,e,\eta_m
\end{array}\!\!\right),
$$ 
with $\eta_m$ defined in the formulation of the theorem.
Then, in view of \eqref{eq:decom}, we obtain the chain of equalities:
\begin{multline*}
{}_4F_3\!\left(\!\!\begin{array}{c}
    a,b,c,f+m+1\\
    d,e,f
\end{array}\!\!\right)
=W_m\cdot{}_4F_3\!\left(\!\begin{array}{c}
    a,b,c,\mu_m+1\\
    d,e,\mu_m
\end{array}\right)+\frac{abc\nu_m}{(s-1) f\sigma_m}{}_4F_3\!\left(\!\!\begin{array}{c}
    a,b,c,\eta_m+1\\
    d,e,\eta_m
\end{array}\!\!\right)
\\
=\sum\limits_{n=0}^\infty \frac{(a)_n(b)_n (c)_n}{(e)_n (d)_n n!}\left(\left(W_m+\frac{abc\nu_m}{(s-1)f\sigma_m}\right)+n\left(\frac{W_m}{\mu_m}+\frac{abc\nu_m}{(s-1)f\sigma_m\eta_m}\right)\right)
\\
=W_{m+1}\sum\limits_{n=0}^\infty \frac{(a)_n(b)_n (c)_n}{(e)_n (d)_n n!} (1+n/\mu_{m+1})
=W_{m+1}\cdot{}_4F_3\!\left(\!\!\begin{array}{c}
    a,b,c,\mu_{m+1}+1\\
    d,e,\mu_{m+1}
\end{array}\!\!\right).
\end{multline*} 
\end{proof} 

In our recent paper \cite[Proposition~5]{KPG} we have proposed an algorithm for obtaining summation formulas for the ${}_4F_3(1)$ function of the form \eqref{eq:4F3unitshift} with non-linearly constrained parameter $f$.   Now Theorem~\ref{th:1} can be used to extend our method to ${}_4F_3(1)$ containing the parameter pair  $\Bigg[\!\!\begin{array}{c}f+m\\[-2pt]f\end{array}\!\!\Bigg]$.  Let's demonstrate this approach with the following example. In \cite[(A.4)]{KPG}  we established the transformation 
\begin{equation}\label{eq:killbottom-1}
{}_{4}F_3\!\!\left(\begin{matrix}a,b,c,\mu+1\\d,e,\mu\end{matrix}\right)
=\frac{[((d-b-1)(d-c-1)-a(d-b-c-1))\mu-abc](d-1)}{(d-a-1)(d-b-1)(d-c-1)\mu}
{}_{4}F_3\!\!\left(\begin{matrix}a,b,c,\eta+1\\d-1,e,\eta\end{matrix}\right),
\end{equation}
where
$$
\eta=\frac{abc+[(1-d)(d-a-b-c-1)-ab-ac-bc]\mu}{(d+e-a-b-c-2)(\mu-d+1)}.
$$
Denote by $s_3=abc,$ $s_2=ab+ac+bc.$  Direct application of the formulas \eqref{eq:plus2},\eqref{eq:killbottom-1}, \cite[(41a)]{KPG}   leads to the following assertion. 
\begin{corollary}
Assume that $d+e=a+b+c+3$ and $f$ satisfies the equation
\begin{equation}
\frac{s_3}{s_2-(2-d)(1-e)}=\frac{s_3(s_2-de+d+e+2f)+(s_3+f^2+f)(2-e-2d+de-s_2)}{(s_3+f^2+f)+(1-d)(s_2-de+d+e+2f)}.
\end{equation}
Then 
 \begin{equation}\label{eq:sumplus2}
{}_4F_3\left(\begin{array}{c}
    a,b,c,f+2\\
    d,e,f
\end{array}\right)=\frac{[(d-b-1)(d-c-1)\mu-a(d-b-c-1)\mu-s_3]\Gamma(d)\Gamma(e)((f)_2+s_3)}{s_3(f)_2(d-a-1)(d-b-1)(d-c-1)\mu\Gamma(a)\Gamma(b)\Gamma(c)},
\end{equation} 
where \begin{equation}\mu=\frac{s_3+f^2+f}{s_2-de+d+e+2f}.  
\end{equation} 
\end{corollary}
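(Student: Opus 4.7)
The plan is to chain together Lemma~\ref{lem1} at $m=2$, the parameter-lowering identity \eqref{eq:killbottom-1}, and the summation formula \cite[(41a)]{KPG}. The hypothesis $d+e=a+b+c+3$ is exactly the condition $s=2$ in the notation of Lemma~\ref{lem1}, so $s-1=1$, and substituting this into \eqref{eq:W2} and \eqref{eq:mu2} gives $W_2=((f)_2+s_3)/(f)_2$ together with $\mu=\mu_2=(s_3+f^2+f)/(s_2-de+d+e+2f)$, which matches the $\mu$ of the corollary. Hence applying \eqref{eq:plus2} yields
\[
{}_4F_3\!\left(\!\!\begin{array}{c}a,b,c,f+2\\d,e,f\end{array}\!\right)=\frac{(f)_2+s_3}{(f)_2}\,{}_4F_3\!\left(\!\!\begin{array}{c}a,b,c,\mu+1\\d,e,\mu\end{array}\!\right).
\]

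Next, I would apply \eqref{eq:killbottom-1} to the rightmost factor. Under the hypothesis of the corollary the denominator $(d+e-a-b-c-2)(\mu-d+1)$ appearing in the definition of $\eta$ there reduces to $\mu-d+1$, so $\eta$ becomes an explicit rational function of $\mu$ and of $a,b,c,d,e$. The identity replaces the ${}_4F_3$ having bottom parameter $d$ by one having bottom parameter $d-1$, producing the algebraic prefactor
\[
\frac{[(d-b-1)(d-c-1)\mu-a(d-b-c-1)\mu-s_3](d-1)}{(d-a-1)(d-b-1)(d-c-1)\mu}
\]
visible in \eqref{eq:sumplus2}.

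Finally, \cite[(41a)]{KPG} evaluates the resulting ${}_4F_3(a,b,c,\eta+1;d-1,e,\eta;1)$ as $\Gamma(d-1)\Gamma(e)/(s_3\,\Gamma(a)\Gamma(b)\Gamma(c))$, provided a certain nonlinear constraint relating the parameters of that series holds. Pulling this constraint back through the two substitutions $\eta\mapsto\mu\mapsto f$ by means of the explicit formulas above produces precisely the rational equation for $f$ stated as the hypothesis of the corollary. Multiplying the three accumulated prefactors and using $\Gamma(d)=(d-1)\Gamma(d-1)$ then completes the derivation of \eqref{eq:sumplus2}.

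The only nonroutine ingredient is the bookkeeping in the last step: one must verify that the summability constraint of \cite[(41a)]{KPG}, after the two-step substitution, reduces to exactly the equation displayed in the corollary. This is a purely algebraic simplification, most easily carried out by writing $\mu=N/D$ with $N=s_3+f^2+f$ and $D=s_2-de+d+e+2f$, clearing the denominator $D$ throughout, and comparing the resulting polynomial identity in $f$ and the remaining parameters side by side.
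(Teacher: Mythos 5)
Your proposal follows exactly the route the paper takes: the paper's entire justification is the one-line remark that the corollary follows by ``direct application of the formulas \eqref{eq:plus2}, \eqref{eq:killbottom-1}, [KPG, (41a)]'', and your chain (Lemma~\ref{lem1} with $s=2$ giving $W_2=((f)_2+s_3)/(f)_2$ and the stated $\mu$, then \eqref{eq:killbottom-1} with its $\eta$-denominator collapsing to $\mu-d+1$, then the summation [KPG, (41a)] whose constraint pulls back to the displayed equation for $f$) is precisely that computation, correctly carried out. No difference in approach; your write-up just makes explicit the bookkeeping the paper leaves to the reader.
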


\section{Negative parameter difference}

Using the the partial fraction decomposition of the ratio $(b)_n/(b+m)_n$ for $m\in\N$, $m\ge2$, given in the proof of \cite[Theorem 2.2]{{KPITSF2018}},  we immediately obtain the expansion 
\begin{equation}\label{eq:dec2}
{}_4F_3\!\left(\!\!\begin{array}{c}
    a,b,c,g\\
    b+m,d,f
\end{array}\!\!\right)=\sum\limits_{q=0}^{m-1}A_q\cdot {}_4F_3\!\left(\!\!\begin{array}{c}
    a,b+q,c,g\\
    b+q+1,d,f
\end{array}\!\!\right),
\end{equation}
where 
$$
A_q=\frac{(b)_m}{(b+q)_m}\prod_{\substack{l=1\\l\not= q+1}}^m(l-q-1)^{-1}.
$$ 
Each term on the right hand side of the above expansion has unit negative parameter shift. It seems natural to ask whether it is possible to ''wrap up'' the expression on the right hand side of \eqref{eq:dec2} into a single  hypergeometric series $_4F_3(1)$. The goal of this section is to answer this question in the affirmative under the additional assumption $g=f+1$.  In view of the results of the previous section, the more general case $g=f+m$, $m\in\N$,  reduces to the one treated here. 
The main result is  the following claim.
\begin{theorem}\label{th:RQtheorem}
There exist rational functions $R_m=R_m(a,b,c,e,f)$ and  $Q_m=Q_m(a,b,c,e,f)$  such that for any integer $m\ge2$ we have
\begin{equation}\label{eq:RQtheorem}
{}_{4}F_3\!\!\left(\begin{matrix}a,b,c,f+1\\b+m,e,f\end{matrix}\right)\!=\!R_m(a,b,c,e,f){}_{4}F_3\!\!\left(\begin{matrix}a,b,c,f+1\\b+1,e,f\end{matrix}\right)+Q_m(a,b,c,e,f)\frac{\Gamma(e)\Gamma(e-a-c-1)}{\Gamma(e-a)\Gamma(e-c)}.
\end{equation}
The functions $R_m$, $Q_m$ can be computed recursively according to the following formulas\emph{:}
$$
R_m=-\frac{B(m-2)}{A(m-2)}R_{m-1}-\frac{C(m-2)}{A(m-2)}R_{m-2},
$$
$$
Q_m=-\frac{B(m-2)}{A(m-2)}Q_{m-1}-\frac{C(m-2)}{A(m-2)}Q_{m-2}
$$
with initial values $R_{0}=0$, $R_{1}=1$, $Q_{0}=e-a-c-1+ac/f$, $Q_{1}=0$ and $A(k)$, $B(k)$, $C(k)$ given in \eqref{eq:3termcoeff}.
\end{theorem}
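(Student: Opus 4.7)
The plan is to show that $F_m := {}_{4}F_3(a,b,c,f+1;\,b+m,e,f;\,1)$ satisfies the same homogeneous three-term recurrence as $R_m$ and $Q_m$, and then to match the first two values.  Set
\[
\Phi_m := R_m F_1 + Q_m \frac{\Gamma(e)\Gamma(e-a-c-1)}{\Gamma(e-a)\Gamma(e-c)}.
\]
As an $m$-independent linear combination of two solutions of the recurrence, $\Phi_m$ itself satisfies $A(m-2)\Phi_m + B(m-2)\Phi_{m-1} + C(m-2)\Phi_{m-2} = 0$.  It therefore suffices to verify the base cases $F_0 = \Phi_0$ and $F_1=\Phi_1$ and to prove that $F_m$ obeys the same recurrence; induction on $m$ then concludes.

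For the base cases, $F_1 = \Phi_1$ is immediate from $R_1 = 1$, $Q_1 = 0$.  For $F_0$ the parameters $b$ in the numerator and denominator cancel, so $F_0 = {}_3F_2(a,c,f+1;\,e,f;\,1)$.  Writing $(f+1)_n/(f)_n = 1 + n/f$, I would split this sum as ${}_2F_1(a,c;e;1) + (ac/(ef))\,{}_2F_1(a+1,c+1;e+1;1)$, evaluate each by Gauss's theorem and combine using $\Gamma(e-a-c)=(e-a-c-1)\Gamma(e-a-c-1)$ and $\Gamma(e+1)=e\Gamma(e)$ to obtain
\[
F_0 = \left(e-a-c-1 + \frac{ac}{f}\right)\frac{\Gamma(e)\Gamma(e-a-c-1)}{\Gamma(e-a)\Gamma(e-c)},
\]
which is precisely $Q_0\cdot \Gamma(e)\Gamma(e-a-c-1)/[\Gamma(e-a)\Gamma(e-c)] = \Phi_0$.

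The main obstacle is showing that $F_m$ satisfies the three-term recurrence with the specific coefficients $A(k),B(k),C(k)$ of \eqref{eq:3termcoeff}.  My preferred route is Zeilberger-style creative telescoping applied to the summand $t_n(m) := (a)_n(b)_n(c)_n(f+1)_n/[(b+m)_n(e)_n(f)_n\, n!]$: since $t_n(m+1)/t_n(m) = (b+m)/(b+m+n)$ is a rational function of $n$ of low degree, one may seek polynomial coefficients $A(m),B(m),C(m)$ and an $n$-rational certificate $v_n(m)$ satisfying $A(m)t_n(m+2)+B(m)t_n(m+1)+C(m)t_n(m) = v_{n+1}(m)-v_n(m)$; summing in $n$, with $v_n$ vanishing at the boundaries by the convergence assumption, yields the required recurrence.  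As an alternative derivation using only tools from \cite{KPG}, termwise subtraction gives
\[
F_m - F_{m+1} = \frac{abc(f+1)}{ef(b+m)(b+m+1)}\,{}_{4}F_3\!\left(\begin{matrix}a+1,b+1,c+1,f+2\\ b+m+2,\,e+1,\,f+1\end{matrix}\right),
\]
whose right-hand ${}_{4}F_3$ (of positive unit shift) can be reduced back to the $F$-family by a parameter-lowering transformation of the type \eqref{eq:shift1}; eliminating the auxiliary function between the two resulting identities would then produce the recurrence linking $F_m,F_{m+1},F_{m+2}$.  Either path delivers the recurrence in principle, but the genuinely delicate algebraic simplification that recognises the coefficients in the explicit polynomial form \eqref{eq:3termcoeff} is where the real work lies.

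With the recurrence and matching initial data in hand, induction on $m\ge 2$ starting from $F_0 = \Phi_0$, $F_1 = \Phi_1$ gives $F_m = \Phi_m$ for every $m$, establishing \eqref{eq:RQtheorem}.
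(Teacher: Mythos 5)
Your logical skeleton is exactly the paper's: Theorem~\ref{th:RQtheorem} is deduced by showing that $F_m$ satisfies the three-term recurrence \eqref{eq:3term}, checking it against the initial data $F_1=\Phi_1$ and $F_0=\Phi_0$ (your Gauss-theorem evaluation of $F_0={}_3F_2(a,c,f+1;e,f;1)$ is correct and is precisely the paper's Corollary~\ref{cr:negative} ingredient), and inducting. The problem is that the one genuinely nontrivial step --- proving that $F_m$ obeys the recurrence with the \emph{specific} coefficients $A(k),B(k),C(k)$ of \eqref{eq:3termcoeff} --- is exactly the content of the paper's Theorem~\ref{th:BottomRec}, and you do not prove it. You offer two sketches. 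The Zeilberger route only guarantees that \emph{some} recurrence of \emph{some} order exists for the proper hypergeometric term $t_n(m)$; it does not establish that the order is two, nor that the coefficients are those of \eqref{eq:3termcoeff}, and without the explicit certificate the telescoping identity is an unverified claim. Your second route starts from a correct termwise identity, but the resulting ${}_4F_3$ with shifted parameters $(a+1,b+1,c+1,f+2;\,b+m+2,e+1,f+1)$ lies outside the family $\{F_k\}$, and the proposed ``elimination of the auxiliary function'' between two such identities is not carried out; it is not clear it closes up into a three-term relation among $F_m,F_{m+1},F_{m+2}$ at all without further transformations. You candidly admit that ``the real work lies'' there --- which means the proof is incomplete precisely where it matters.

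For comparison, the paper's route to the recurrence is different from both of your sketches: each ${}_4F_3$ in the family is decomposed into a combination of two contiguous ${}_3F_2(1)$'s via the unit-shift transformations \eqref{eq:b+k+1}, \eqref{eq:b+k}, \eqref{eq:b+k+2} (all drawn from \cite{KPG}), and the known three-term contiguity relation \eqref{eq:3term3F2} for ${}_3F_2(1)$ then forces a small linear system whose solution is $A(k),B(k),C(k)$. If you want to complete your argument, either carry out that reduction or exhibit an explicit telescoping certificate $v_n(m)$ and verify the boundary terms vanish; as it stands, the theorem is reduced to an unproved lemma.
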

It follows immediately from \cite[(38)]{KPG} and the Gauss summation formula that
$$
{}_{4}F_3\!\!\left(\begin{matrix}a,b,c,f+1\\b+1,e,f\end{matrix}\right)=\frac{a(b-f)}{f(b-a)}{}_{3}F_2\!\!\left(\begin{matrix}a+1,b,c\\b+1,e\end{matrix}\right)+\frac{b(f-a)\Gamma(e)\Gamma(e-a-c)}{f(b-a)\Gamma(e-a)\Gamma(e-c)}.
$$
Substituting this expression on the right hand side of \eqref{eq:RQtheorem} we can further reduce it to ${}_{3}F_2(1)$ with negative unit shift. Theorem~\ref{th:RQtheorem} will follows immediately from the recurrence relation given next and its $k=0$ particular case given in Corollary~\ref{cr:negative}.  
\begin{theorem}\label{th:BottomRec}
The following three-term recurrence relation holds:
\begin{equation}\label{eq:3term}
A(k){}_{4}F_3\!\!\left(\begin{matrix}a,b,c,f+1\\b+k+2,e,f\end{matrix}\right)+
B(k){}_{4}F_3\!\!\left(\begin{matrix}a,b,c,f+1\\b+k+1,e,f \end{matrix}\right)
+C(k){}_{4}F_3\!\!\left(\begin{matrix}a,b,c,f+1\\b+k,e,f \end{matrix}\right)\\
=0,
\end{equation}
where the coefficients are given by 
\begin{equation}\label{eq:3termcoeff}
\begin{split}
A(k)&=-\frac{\alpha_k(k+1)(b-a+k+1)(b-c+k+1)}{(b+k+1)},    
\\[7pt]
B(k)&=\beta_k(b-a+k)(b-f+k+1)(e-a-c+k)
\\
+&\left((k+1)(a-f)(b-c+k+1)-a(a-e+1)(b-f+k+1)\right)
\\
&\times\left(\beta_k+k(b-c+k)\right),
\\[7pt]
C(k)&=-\alpha_{k+1}(b+k)(e-a-c-1+k)
\end{split}
\end{equation}
with 
\begin{align*}
\alpha_k&=(b-f)[a(c-f)+f(e-c-1)]+kf(e-f-1),    
\\[5pt]
\beta_k&=a(c-k)+f(e-a-c-1+k).
\end{align*}
\end{theorem}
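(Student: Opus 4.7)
My plan is to establish the recurrence by the creative telescoping (Zeilberger) method. Write
\[
S_k := {}_4F_3\!\!\left(\begin{matrix}a,b,c,f+1\\b+k,e,f\end{matrix}\right)=\sum_{n=0}^\infty T_n(k),\qquad T_n(k)=\frac{(a)_n(b)_n(c)_n(f+1)_n}{(b+k)_n(e)_n(f)_n\, n!}.
\]
The key structural feature is that only the single Pochhammer symbol $(b+k)_n$ depends on $k$, so
\[
\frac{T_n(k+1)}{T_n(k)}=\frac{b+k}{b+k+n},\qquad \frac{T_n(k+2)}{T_n(k)}=\frac{(b+k)(b+k+1)}{(b+k+n)(b+k+n+1)}
\]
are simple rational functions of $n$. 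The aim is to exhibit a rational function $G(k,n)$ of $n$ satisfying the termwise identity
\begin{equation*}
A(k)T_n(k+2)+B(k)T_n(k+1)+C(k)T_n(k)=G(k,n+1)T_{n+1}(k)-G(k,n)T_n(k);
\end{equation*}
summing over $n\ge 0$ will then telescope to the desired recurrence.

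To carry this out I would divide through by $T_n(k)$ and use the standard consecutive-term ratio $T_{n+1}(k)/T_n(k)=(a+n)(b+n)(c+n)(f+n+1)/[(b+k+n)(e+n)(f+n)(n+1)]$ to recast the identity as a single rational-function equation in $n$. An ansatz of the form
\[
G(k,n)=\frac{n\,P(k,n)}{(b+k+n-1)(f+n-1)},
\]
with $P(k,n)$ a polynomial of low degree in $n$, is natural: the explicit factor $n$ enforces $G(k,0)=0$, while the denominator is dictated by the poles that must cancel on the two sides. Clearing denominators reduces the equation to a polynomial identity in $n$; matching coefficients of powers of $n$ yields a linear system whose unknowns are the coefficients of $P$ together with $A(k)$, $B(k)$, $C(k)$. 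The content of the theorem is that this system is consistent and that its minimal-degree solution in $k$ is the one displayed in \eqref{eq:3termcoeff}.

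The principal obstacle is purely computational. The resulting polynomial identity has degree roughly five in $n$ and its coefficients involve all six symbolic parameters $a,b,c,e,f,k$, so manual expansion is prohibitive; in practice the verification is carried out symbolically via Zeilberger's algorithm or a direct linear-algebra implementation, after which one reads off the displayed expressions for $A$, $B$, $C$ and the certificate $G$. With the termwise identity in hand, the passage to the summed statement is routine: the boundary term at $n=0$ vanishes because $G(k,0)=0$ by the ansatz, while $G(k,N)T_N(k)\to 0$ as $N\to\infty$ follows from the paper's standing convergence convention, which forces $T_N(k)$ to decay fast enough to absorb the at-most-polynomial growth of $G(k,N)$ in $N$.
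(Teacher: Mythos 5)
Your route---creative telescoping in the parameter $k$---is genuinely different from the paper's. The paper does not telescope: it expands each of the three ${}_4F_3$'s into a combination of two contiguous ${}_3F_2$'s using the two-term transformations (31), (37), (38) of \cite{KPG} (formulas \eqref{eq:b+k+1}, \eqref{eq:b+k}, \eqref{eq:b+k+2}), invokes the known three-term relation \eqref{eq:3term3F2} for contiguous ${}_3F_2(1)$'s, and then determines $A$, $B$, $C$ by solving a $3\times3$ linear system. That argument explains \emph{why} an order-two relation must exist (the three functions live in a two-dimensional span of ${}_3F_2$'s), whereas your method, if executed, would give a self-contained termwise certificate independent of the machinery of \cite{KPG}. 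So the strategy is legitimate and would be a nice alternative.

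The problem is that, as written, nothing in your proposal actually establishes the theorem. First, Zeilberger's algorithm guarantees a recurrence of \emph{some} order with \emph{some} rational certificate; it does not guarantee order two, nor that your ansatz $G(k,n)=nP(k,n)/[(b+k+n-1)(f+n-1)]$ with $P$ of low degree is the right shape for the certificate. Second, and more importantly, you never verify that the solution of your linear system is the displayed $A(k)$, $B(k)$, $C(k)$ --- the entire content of the theorem is those specific coefficients, and you defer exactly that to an unexecuted symbolic computation (``one reads off the displayed expressions''). A proof must either exhibit the certificate $G$ and check the termwise identity, or derive the coefficients as the paper does. Third, your disposal of the boundary term is too quick: the standing convergence convention only gives $T_N(k)=O(N^{a+c-e-k})$ with $\Re(e+k-a-c)>1$, which makes $\sum T_N$ converge but does not force $G(k,N)T_N(k)\to0$ when $G$ grows polynomially of positive degree in $N$; you would need to restrict to a smaller parameter domain where the boundary term vanishes and then extend the identity by analytic continuation. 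None of these steps is fatal to the method, but all three must be supplied before this counts as a proof.
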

\textbf{Proof}.  We will derive relation \eqref{eq:3term} from the following three-term relation for ${}_3F_2(1)$:
\begin{multline}\label{eq:3term3F2}
{}_{3}F_2\!\!\left(\begin{matrix}a+1,b,c\\b+k+1,e\end{matrix}\right)
=\underbrace{\frac{a-e+1}{a+c-e-k}}_{R_2}{}_{3}F_2\!\!\left(\begin{matrix}a,b,c\\b+k+1,e\end{matrix}\right)
\\
+\underbrace{\frac{(k+1)(b-c+k+1)}{(b+k+1)(-a-c+e+k)}}_{R_1}
{}_{3}F_2\!\!\left(\begin{matrix}a+1,b,c\\b+k+2,e\end{matrix}\right).
\end{multline}
This formula can be verified by combining \cite[(31)]{KPG} and \cite[(32)]{KPG} with the formula 
\begin{equation}\label{eq:inverse}
	{}_{4}F_3\!\!\left(\begin{matrix}a,b,c, f+1\\d,e,f\end{matrix}\right)
	=\frac{f(a+b+c-d-e+1)-bc}{f(a+b+c-d-e+1)}{}_{4}F_3\!\!\left(\begin{matrix}a-1,b,c,\mu+1\\d,e,\mu\end{matrix}\right),
\end{equation}
given in \cite[(A2)]{KPG}, and replacing $d\to b+k+1$ and $a\to a+1$ in the resulting identity. The parameter $\mu$ in \eqref{eq:inverse} is given by 
$$
\mu=\frac{(a-1)[(a+b+c-d-e+1)f-bc]}{(a+b+c-d-e+1)f-bc-(a-1)(d+e-a-1)+de-d-e+1}.
$$
Alternatively, one can apply the algorithm elaborated in \cite{EbisuIwasaki} which permits finding coefficients in any three-term relation connecting contiguous  ${}_3F_2(1)$ to derive \eqref{eq:3term3F2}.

Our next goal is to expand each ${}_4F_{3}$  term in \eqref{eq:3term} into the sum of ${}_3F_{2}$ terms of the form appearing in \eqref{eq:3term3F2}.
First, writing $a$ for $\alpha-1$ and $b+k+1$ for $d$ in \cite[(37)]{KPG} we get
\begin{equation}\label{eq:b+k+1}
{}_{4}F_3\!\!\left(\begin{matrix}a,b,c,\xi+1\\b+k+1,e,\xi\end{matrix}\right)
=\frac{a}{\xi}{}_{3}F_2\!\!\left(\begin{matrix}a+1,b,c\\b+k+1,e\end{matrix}\right)
+\frac{\xi-a}{\xi}{}_{3}F_2\!\!\left(\begin{matrix}a,b,c\\b+k+1,e\end{matrix}\right),
\end{equation}
which yields the middle term in \eqref{eq:3term} on writing $\xi=f$.  Denote the coefficients in the resulting formula by $\gamma=(f-a)/a$, $\delta=f/a$. 

Next, substituting $d=b+k$ into transformation \cite[(31)]{KPG} we get
\begin{equation}\label{eq:bottom+1}
{}_{4}F_3\!\!\left(\begin{matrix}a,b,c,f+1\\b+k,e,f\end{matrix}\right)
=\frac{fs(b+k)+abc}{fs(b+k)}
{}_{4}F_3\!\!\left(\begin{matrix}a,b,c,\nu+1\\b+k+1,e,\nu\end{matrix}\right),
\end{equation}
where $s=k+e-a-c-1$ and
$$
\nu=\frac{fs(b+k)+abc}{fs+k(b+k-c)+a(c-k)}.
$$
Hence, taking $\xi=\nu$ in \eqref{eq:b+k+1} and substituting it back into \eqref{eq:bottom+1}, we arrive at 
\begin{equation}\label{eq:b+k}
{}_{4}F_3\!\!\left(\begin{matrix}a,b,c,f+1\\b+k,e,f\end{matrix}\right)
=\lambda(k){}_{3}F_2\!\!\left(\begin{matrix}a+1,b,c\\b+k+1,e\end{matrix}\right)
+\eta(k){}_{3}F_2\!\!\left(\begin{matrix}a,b,c\\b+k+1,e\end{matrix}\right),
\end{equation}
where
$$
\lambda(k)=\frac{a\left(fs+k(b+k-c)+a(c-k)\right)}{fs(b+k)},~~~~\eta(k)=\frac{(a-b-k)\left(a(k-c)-fs\right)}{fs(b+k)},
$$
which is the required expression for the rightmost term in \eqref{eq:3term}.
Finally, substituting $a$ for $\alpha$ and $b+k+1$ for $d$ in \cite[(38)]{KPG} brings it to the form:
\begin{equation}\label{eq:b+k+2}
{}_{4}F_3\!\!\left(\begin{matrix}a,b,c,f+1\\b+k+2,e,f\end{matrix}\right)=\beta(k){}_{3}F_2\!\!\left(\begin{matrix}a,b,c\\b+k+1,e\end{matrix}\right)
+\alpha(k) {}_{3}F_2\!\!\left(\begin{matrix}a+1,b,c\\b+k+2,e\end{matrix}\right),
\end{equation}
where
$$
\alpha(k)=\frac{a(b-f+k+1)}{f (b-a+k+1)}, ~~~~\beta(k)=\frac{(f-a)(b+k+1)}{f(b-a+k+1)},
$$
which is the required expression for the leftmost term in \eqref{eq:3term}.

Suppose now that 
\begin{equation*}
	A\cdot{}_{4}F_3\!\!\left(\begin{matrix}a,b,c,f+1\\b+k+2,e,f\end{matrix}\right)+ B\cdot{}_{4}F_3\!\!\left(\begin{matrix}a,b,c,f+1\\b+k+1,e,f \end{matrix}\right)
	+C\cdot{}_{4}F_3\!\!\left(\begin{matrix}a,b,c,f+1\\b+k,e,f\end{matrix}\right)=0,
\end{equation*}
with unknown coefficients $A$, $B$, $C$. Substituting the corresponding decompositions \eqref{eq:b+k+1}, \eqref{eq:b+k}, \eqref{eq:b+k+2} we will have:
$$
A\alpha{}_{3}F_2\!\!\left(\begin{matrix}a+1,b,c\\b+k+2,e\end{matrix}\right)
+(A\beta+B\gamma+C\eta){}_{3}F_2\!\!\left(\begin{matrix}a,b,c\\b+k+1,e\end{matrix}\right)
+(B\delta+C\lambda){}_{3}F_2\!\!\left(\begin{matrix}a+1,b,c\\b+k+1,e\end{matrix}\right)=0.
$$
Comparing this formula with \eqref{eq:3term3F2} and equating coefficients leads to the following system of linear equations for  $A$, $B$, $C$:
$$
A\alpha=R_1,~~~A\beta+B\gamma+C\eta=R_2,~~~ B\delta+C\lambda=-1
$$
with $R_1$, $R_2$ defined in \eqref{eq:3term3F2}. Solving this system yields \eqref{eq:3termcoeff}. $\hfill\square$

Substituting $k=0$ in \eqref{eq:3term} and using the summation formula (obtained by decomposing ${}_{3}F_2(1)$ into the sum of two ${}_2F_{1}$ and applying the Gauss theorem) 
$$
{}_{3}F_2\!\!\left(\begin{matrix}a,c,f+1\\e,f\end{matrix}\right)=\frac{\Gamma(e)\Gamma(e-a-c-1)}{f\Gamma(e-a)\Gamma(e-c)}((e-a-c-1)f+ac),
$$
we arrive at:
\begin{corollary}\label{cr:negative}
The following identity holds true\emph{:}
\begin{multline*}
	{}_{4}F_3\!\!\left(\begin{matrix}a,b,c,f+1\\b+2,e,f\end{matrix}\right)=-\frac{B(0)}{A(0)}{}_{4}F_3\!\!\left(\begin{matrix}a, b, c, f+1\\b+1,e,f\end{matrix}\right)
	\\
	-\frac{C(0)\Gamma(e)\Gamma(e-a-c-1)}{A(0)f\Gamma(e-a)\Gamma(e-c)}((e-a-c-1)f+ac)
\end{multline*}
with coefficients defined in \eqref{eq:3termcoeff}.
\end{corollary}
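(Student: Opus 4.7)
The plan is to specialize the three-term recurrence \eqref{eq:3term} of Theorem \ref{th:BottomRec} at $k=0$ and then evaluate in closed form the single term whose denominator parameter collapses against a numerator parameter. Taking $k=0$ in \eqref{eq:3term} gives
\begin{equation*}
A(0)\,{}_{4}F_3\!\!\left(\begin{matrix}a,b,c,f+1\\b+2,e,f\end{matrix}\right)+B(0)\,{}_{4}F_3\!\!\left(\begin{matrix}a,b,c,f+1\\b+1,e,f\end{matrix}\right)+C(0)\,{}_{4}F_3\!\!\left(\begin{matrix}a,b,c,f+1\\b,e,f\end{matrix}\right)=0,
\end{equation*}
so the strategy is to evaluate the last term in closed form, move it to the right-hand side, and divide through by $A(0)$; this will produce exactly the assertion of the corollary.

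In the last ${}_4F_3$ the parameter $b$ appears simultaneously on the top and on the bottom, so the Pochhammer symbols $(b)_n$ cancel term-by-term and the series collapses to
$$
{}_{4}F_3\!\!\left(\begin{matrix}a,b,c,f+1\\b,e,f\end{matrix}\right)={}_{3}F_2\!\!\left(\begin{matrix}a,c,f+1\\e,f\end{matrix}\right).
$$
Next I would evaluate this ${}_3F_2(1)$ by the procedure already cited in the paper: using $(f+1)_n/(f)_n=1+n/f$, the series splits as ${}_2F_1(a,c;e;1)$ plus $ac/(fe)$ times ${}_2F_1(a+1,c+1;e+1;1)$, the latter obtained from the index shift $n\mapsto n-1$ in the piece proportional to $n$. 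Each of the two ${}_2F_1(1)$'s is summable by the Gauss theorem, and combining them via the identity $\Gamma(e-a-c)=(e-a-c-1)\Gamma(e-a-c-1)$ yields
$$
{}_{3}F_2\!\!\left(\begin{matrix}a,c,f+1\\e,f\end{matrix}\right)=\frac{\Gamma(e)\Gamma(e-a-c-1)}{f\,\Gamma(e-a)\Gamma(e-c)}\bigl((e-a-c-1)f+ac\bigr).
$$

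Substituting this value into the $k=0$ recurrence and solving for the first ${}_4F_3$ gives the identity of the corollary verbatim. I do not anticipate any serious obstacle here: the three-term relation itself is already established in Theorem \ref{th:BottomRec}, the degeneration of the $C(0)$-term is an immediate Pochhammer cancellation, and the closed evaluation of the remaining ${}_3F_2(1)$ is a two-line consequence of Gauss's theorem. The only remaining work is a routine rearrangement with the explicit coefficients $A(0)$, $B(0)$, $C(0)$ from \eqref{eq:3termcoeff}.
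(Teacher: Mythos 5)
Your proposal is correct and follows exactly the paper's own route: specialize the three-term relation \eqref{eq:3term} at $k=0$, let the $(b)_n$ cancellation collapse the $C(0)$-term to ${}_{3}F_2(a,c,f+1;e,f;1)$, and sum that by splitting into two Gauss-summable ${}_2F_1(1)$'s via $(f+1)_n/(f)_n=1+n/f$. The closed form you obtain for the ${}_3F_2(1)$ agrees with the one stated in the paper, so nothing is missing.
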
 
Let us conclude this section with an explicit example of combined application of Theorems~\ref{th:1},\ref{th:RQtheorem}, the remark above Theorem~\ref{th:BottomRec} and Thomae's transformation. After various rearrangements this leads to the following identity ($s=e-a-c$):
\begin{multline*}
	\frac{\Gamma(e-c)\Gamma(e-a)(f)_2}{\Gamma(e)\Gamma(e-a-c)(b)_{2}}{}_{4}F_3\!\left(\begin{matrix}a,b,c,f+2\\b+2,e,f\end{matrix}\right)
	=\frac{(1+f-e)_2}{(1+b-e)_2}+\left(1-\frac{(1+f-e)_2}{(1+b-e)_2}\right)
	\\
	\times\!\!\Bigg[\frac{b^2(1-s)\!+\!b(f+e-1)(s+1)\!-\!3b(e-1)\!-\!s(f+1)(e-2)}
	{b(b+f-2e+3)}\!-\!\frac{s(b-e+2)}{b(e\!-\!a)(e\!-\!c)(b+f-2e+3)}
	\\
\times\!\left(b^2(1-s)+bf(s+1)-b(e-2)-abc+(a-1)(c-1)(f+1)\right){}_{3}F_2\!\!\left(\begin{matrix}1,s+1,e-b-1\\e-c+1, e-a+1\end{matrix}\right)\!\!\Bigg].
\end{multline*}

\section{Relation to $\psi$-series}
Define
\begin{equation}\label{eq:2F1hat}
 {}_{2}\hat{F}_1\!\!\left(\left.\begin{matrix}a,c\\e\end{matrix}\right\vert b\right):=\sum_{k=1}^{\infty}\frac{(a)_k(c)_{k}}{(e)_kk!}(\psi(b+k)-\psi(b)).
\end{equation}
Note that writing ${}_{3}F_2^{(1)}$ for the derivative of ${}_{3}F_{2}$ with respect to the first upper parameter we will have
$$
{}_{2}\hat{F}_1\!\!\left(\left.\begin{matrix}a,c\\e\end{matrix}\right\vert b\right)={}_{3}F_2^{(1)}\!\!\left(\begin{matrix}b,a,c\\b,e\end{matrix}\right)
~~\text{and}~~
{}_{2}\hat{F}_1\!\!\left(\left.\begin{matrix}a,c\\e\end{matrix}\right\vert a\right)={}_{2}F_1^{(1)}\!\!\left(\begin{matrix}a,c\\e\end{matrix}\right).
$$
It is further well-known and is easy to see that parametric derivatives can be expressed in terms of the Kamp\'{e} de Feri\'{e}t hypergeometric function of two variables. In particular, according to \cite[(3.1)]{Cvijovic}, we have:
\begin{equation*}
{}_{2}\hat{F}_1\!\!\left(\left.\begin{matrix}a,c\\e\end{matrix}\right\vert b\right)=\frac{ac}{be}F^{2:2;1}_{2:1;0}\!\left(\!\left.\begin{matrix}a+1,c+1:1,b;1\\e+1, 2:b+1;-\end{matrix}\right\vert 1,1\!\right).
\end{equation*}

\begin{theorem}\label{th:4F3to2F1hat}
The following identity holds\emph{:}
\begin{multline}\label{eq:4F3to2F1hat}
{}_{4}F_3\!\!\left(\begin{matrix}a,b,c,f+1\\b+1,e,f\end{matrix}\right)
\!=\!\frac{abc(b-f)}{f(b-a)(b-c)}
\!\left\{\!{}_{2}\hat{F}_1\!\!\left(\left.\begin{matrix}a,c\\e\end{matrix}\right\vert b\!\right)
\!+\!\frac{1+a+c-e}{e}{}_{2}\hat{F}_1\!\!\left(\left.\begin{matrix}a+1,c+1\\e+1\end{matrix}\right\vert b+1\!\right)\!\right\}
\\
+\frac{f(b-c)(a^2+(a-b)(c-e+1))+ab^2(c-f)}{f(b-a)(b-c)}\frac{\Gamma(e)\Gamma(e-a-c-1)}{\Gamma(e-a)\Gamma(e-c)}.
\end{multline}
\end{theorem}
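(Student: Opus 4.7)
\emph{Proof strategy.} My plan is to start from the identity stated just after Theorem~\ref{th:RQtheorem}, which expresses the left-hand side $_{4}F_3$ in terms of $_{3}F_2(a{+}1,b,c;b{+}1,e;1)$ and a Gauss-type gamma factor. Using $(b)_k/(b{+}1)_k = b/(b{+}k)$ combined with $(a{+}1)_k = (a{+}k)(a)_k/a$ and the split $(a{+}k)/(b{+}k) = 1 + (a{-}b)/(b{+}k)$, this $_3F_2$ decomposes as $(b/a)G_0 + (b(a{-}b)/a)\,S$, where $G_0 := {}_2F_1(a,c;e;1)$ is closed form by Gauss's summation formula and
$$S := \sum_{k\ge 0}\frac{(a)_k(c)_k}{(e)_k\, k!\,(b+k)}.$$
After substitution the two $G_0$-contributions collapse, and the entire theorem becomes equivalent to the clean reduction
$${}_{4}F_3\!\!\left(\begin{matrix}a,b,c,f+1\\b+1,e,f\end{matrix}\right) = \frac{b}{f}G_0 - \frac{b(b-f)}{f}\,S,$$
provided $S$ can be expressed via the two ${}_{2}\hat{F}_1$ series in the statement plus Gauss constants.

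The heart of the argument is a telescoping identity. Set $\Psi := {}_{2}\hat{F}_1(a,c;e\,|\,b)$ and introduce the auxiliary sum
$$\Psi' := \sum_{m\ge 1}\frac{m\,(a)_m(c)_m}{(e)_m\, m!}\bigl(\psi(b+m)-\psi(b)\bigr).$$
The elementary factorisation
$$1 + \frac{(1+a+c-e)m}{ac} = \frac{(a+m)(c+m) - m(e+m-1)}{ac}$$
splits $\Psi + (1{+}a{+}c{-}e)\Psi'/(ac)$ into two series; after the index shift $m \to m+1$ is applied to the piece containing $m(e{+}m{-}1)/(e)_m = m/(e)_{m-1}$, both pieces carry $(a{+}1)_m(c{+}1)_m$ but with $\psi$-arguments $\psi(b{+}m)$ and $\psi(b{+}m{+}1)$; the difference telescopes via $\psi(b{+}m{+}1)-\psi(b{+}m) = 1/(b{+}m)$ to yield
$$\Psi + \frac{1+a+c-e}{ac}\,\Psi' \;=\; -\sum_{m\ge 0}\frac{(a+1)_m(c+1)_m}{(e)_m\, m!\,(b+m)} \;=:\; -U.$$
A parallel but simpler index shift, using $\psi(b{+}m)-\psi(b{+}1) = (\psi(b{+}m)-\psi(b))-1/b$, yields ${}_{2}\hat{F}_1(a{+}1,c{+}1;e{+}1\,|\,b{+}1) = (e/ac)\,\Psi' - eG_1/(abc)$, where $G_1 := (ac/e)\,{}_2F_1(a{+}1,c{+}1;e{+}1;1)$. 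Using $(1{+}a{+}c{-}e)\Gamma(e{-}a{-}c{-}1) = -\Gamma(e{-}a{-}c)$ to simplify the free constant then gives
$$\Psi + \frac{1+a+c-e}{e}\,{}_{2}\hat{F}_1(a{+}1,c{+}1;e{+}1\,|\,b{+}1) = -U + G_0/b.$$

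To bridge $U$ and $S$, I would apply the partial fraction $(a{+}m)(c{+}m)/(b{+}m) = (b{+}m) + (a{+}c{-}2b) + (a{-}b)(c{-}b)/(b{+}m)$ inside $U$ via $(a{+}1)_m(c{+}1)_m = (a{+}m)(c{+}m)(a)_m(c)_m/(ac)$. Summing the three pieces, using Gauss again and the index shift $\sum_{m\ge 1} m(a)_m(c)_m/[(e)_m m!] = G_1$, produces the linear relation
$$ac\,U = (a+c-b)\,G_0 + G_1 + (a-b)(c-b)\,S.$$
Substituting this into the displayed combination and then into $_4F_3 = (b/f)G_0 - (b(b{-}f)/f)S$, the coefficient of $S$ reproduces $abc(b{-}f)/[f(b{-}a)(b{-}c)]$ exactly, via $(b{-}a)(b{-}c) = (a{-}b)(c{-}b)$. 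The principal obstacle will then be the final algebraic bookkeeping: after using $G_1 = ac\,G_0/(e{-}a{-}c{-}1)$, the residual $G_0$- and $G_1$-coefficients must collapse into the polynomial $f(b{-}c)(a^2+(a{-}b)(c{-}e{+}1))+ab^2(c{-}f)$ of the statement. This reduces to verifying the polynomial identity
$$f(e-a-c-1)(b-a)(b-c) + abc(b-f) = f(b-c)\bigl[a^2+(a-b)(c-e+1)\bigr] + ab^2(c-f),$$
whose $f$-free terms are both $ab^2c$ and whose $f$-coefficients coincide after the routine expansion $(e{-}a{-}c{-}1)(b{-}a) - a^2 - (a{-}b)(c{-}e{+}1) = -ab$.
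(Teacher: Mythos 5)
Your argument is correct, and it reaches \eqref{eq:4F3to2F1hat} by a genuinely different route than the paper. The paper's proof is a confluence argument: it sets $b-d+1=\epsilon$ in the two-term transformation \eqref{eq:killbottom-1}, observes that the prefactor $M(\epsilon)$ develops a pole $\frac{1}{\epsilon}\cdot\frac{abc(b-f)}{f(b-a)(b-c)}$, and extracts the $\psi$-series from the first-order term of $(b)_k/(b-\epsilon)_k=1+\epsilon(\psi(b+k)-\psi(b))+O(\epsilon^2)$ before letting $\epsilon\to0$. You instead work entirely at the "degenerate" point: you start from the exact reduction to ${}_{3}F_2(a+1,b,c;b+1,e;1)$ stated just after Theorem~\ref{th:RQtheorem} (which comes from \cite[(38)]{KPG} and Gauss, so there is no circularity), reduce everything to the elementary sum $S=\sum_k (a)_k(c)_k/[(e)_k\,k!\,(b+k)]$, and then reconstruct the $\psi$-series by running the telescoping $\psi(b+m+1)-\psi(b+m)=1/(b+m)$ backwards. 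I checked the key identities: the factorisation $ac+(1+a+c-e)m=(a+m)(c+m)-m(e+m-1)$ and the resulting relation $\Psi+\tfrac{1+a+c-e}{ac}\Psi'=-U$; the shift formula ${}_{2}\hat{F}_1(a{+}1,c{+}1;e{+}1\,|\,b{+}1)=\tfrac{e}{ac}\Psi'-\tfrac{e}{abc}G_1$; the bridge $ac\,U=(a+c-b)G_0+G_1+(a-b)(c-b)S$; and the closing polynomial identity $f(e-a-c-1)(b-a)(b-c)+abc(b-f)=f(b-c)[a^2+(a-b)(c-e+1)]+ab^2(c-f)$ --- all hold. What each approach buys: the paper's limit argument fits the result into the systematic machinery of \cite{KPG} and explains conceptually why a $\psi$-series must appear (it is the confluent limit of a contiguous pair of bottom parameters), but it silently interchanges the limit $\epsilon\to0$ with infinite summation; your derivation is an exact, finitary series manipulation that avoids that interchange, at the cost of splitting convergent series into pieces such as $\Psi'$ and $U$ whose individual convergence requires $e>a+c+1$ --- a restriction that is in any case forced by the right-hand side of \eqref{eq:4F3to2F1hat} itself and is covered by the paper's blanket convergence convention. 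Either proof is acceptable; yours is arguably more self-contained.
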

\begin{proof}
Writing $b-d+1=\epsilon$ (with the intention to let $\epsilon\to0$) in formula \eqref{eq:killbottom-1} brings it to the form:
\begin{equation}\label{eq:epsilonform}
{}_{4}F_3\!\!\left(\begin{matrix}a,b,c,f+1\\b+1-\epsilon,e,f\end{matrix}\right)
=M(\epsilon)
{}_{4}F_3\!\!\left(\begin{matrix}a,b,c,\eta+1\\b-\epsilon,e,\eta\end{matrix}\right),
\end{equation}
where
\begin{multline*}
M(\epsilon)=\frac{\epsilon(c+\epsilon-b)+a(c+\epsilon)-abc/f}{\epsilon(c+\epsilon-b)+a(c+\epsilon)-abc/(b-\epsilon)}
\\
=\frac{(b-\epsilon)(\epsilon^2+\epsilon(c-b+a)+ac(1-b/f))}{(b-\epsilon)(\epsilon^2+\epsilon(c-b+a)+ac)-abc}
\\
=\frac{1}{\epsilon}\frac{abc(1-b/f)+\epsilon(b(c-b+a)-ac(1-b/f))+\epsilon^2(2b-a-c)-\epsilon^3}
{b(c-b+a)-ac+\epsilon(2b-a-c)-\epsilon^2}
\\
=\frac{1}{\epsilon}\frac{abc(b-f)+O(\epsilon)}
{f(b-a)(b-c)+O(\epsilon)}=\frac{1}{\epsilon}\frac{abc(b-f)}
{f(b-a)(b-c)}(1+O(\epsilon))
\end{multline*}
and
$$
\eta=\frac{f\epsilon(c-b+\epsilon)+af(c+\epsilon)-abc}{(e-a-c-1-\epsilon)(b-\epsilon-f)}.
$$
Further, by \cite[(13)]{KPG} and the above expression for $\eta$, we obtain:
\begin{multline*}
{}_{4}F_3\!\!\left(\begin{matrix}a,b,c,\eta+1\\b-\epsilon,e,\eta\end{matrix}\right)
={}_{3}F_2\!\!\left(\begin{matrix}a,b,c\\b-\epsilon,e\end{matrix}\right)
+\frac{abc}{(b-\epsilon)e\eta}{}_{3}F_2\!\!\left(\begin{matrix}a+1,b+1,c+1\\b-\epsilon+1,e+1\end{matrix}\right)
\\
={}_{3}F_2\!\!\left(\begin{matrix}a,b,c\\b-\epsilon,e\end{matrix}\right)
+\frac{abc(e-a-c-1-\epsilon)(b-\epsilon-f)}{(b-\epsilon)e(f\epsilon(c-b+\epsilon)+af(c+\epsilon)-abc)}
{}_{3}F_2\!\!\left(\begin{matrix}a+1,b+1,c+1\\b-\epsilon+1,e+1\end{matrix}\right)
\\
={}_{3}F_2\!\!\left(\begin{matrix}a,b,c\\b-\epsilon,e\end{matrix}\right)+\frac{1+a+c-e}{e} {}_{3}F_2\!\!\left(\begin{matrix}a+1,b+1,c+1\\b-\epsilon+1,e+1\end{matrix}\right)
\\
+\left\{\frac{f(b-c)(a^2+(a-b)(c-e+1))+ab^2(c-f)}
{abce(b-f)}\epsilon +O(\epsilon^2)\right\}
 {}_{3}F_2\!\!\left(\begin{matrix}a+1,b+1,c+1\\b-\epsilon+1,e+1\end{matrix}\right).
\end{multline*}
Next,  note that as $\epsilon\to0$ we have
$$
\frac{(b)_{k}}{(b-\epsilon)_{k}}=1+\epsilon\sum_{j=0}^{k-1}\frac{1}{b+j}+O(\epsilon^2)
=1+\epsilon(\psi(b+k)-\psi(b))+O(\epsilon^2),
$$
so that
\begin{multline*}
{}_{3}F_2\!\!\left(\begin{matrix}a,b,c\\b-\epsilon,e\end{matrix}\right)={}_{2}F_1\!\!\left(\begin{matrix}a,c\\e\end{matrix}\right)
+\epsilon\sum_{k=0}^{\infty}\frac{(a)_k(c)_{k}}{(e)_kk!}(\psi(b+k)-\psi(b))+O(\epsilon^2)
\\
=\frac{\Gamma(e)\Gamma(e-a-c)}{\Gamma(e-a)\Gamma(e-c)}+\epsilon\cdot {}_{2}\hat{F}_1\!\!\left(\left.\begin{matrix}a,c\\e\end{matrix}\right\vert b\right)+O(\epsilon^2),
\end{multline*}
where we used the Gauss formula for ${}_{2}F_1(1)$ and the definition given in \eqref{eq:2F1hat}.

Substituting $M(\epsilon)$ and the above calculations into \eqref{eq:epsilonform} we will finally have
\begin{multline*}
 {}_{4}F_3\!\!\left(\begin{matrix}a,b,c,f+1\\b+1-\epsilon,e,f\end{matrix}\right)
=\frac{1}{\epsilon}\frac{abc(b-f)}{f(b-a)(b-c)}(1+O(\epsilon))
\\
\left\{
\epsilon\cdot {}_{2}\hat{F}_1\!\!\left(\left.\begin{matrix}a,c\\e\end{matrix}\right\vert b\right)+O(\epsilon^2)
+\frac{1+a+c-e}{e}\epsilon\cdot {}_{2}\hat{F}_1\!\!\left(\left.\begin{matrix}a+1,c+1\\e+1\end{matrix}\right\vert b+1\right)+O(\epsilon^2)\right\}
\\
+\frac{1}{\epsilon}\frac{abc(b-f)}{f(b-a)(b-c)}(1+O(\epsilon))\left\{\frac{f(b-c)(a^2+(a-b)(c-e+1))+ab^2(c-f)}
{abce(b-f)}\epsilon +O(\epsilon^2)\right\}
\\
\times
\left\{{}_{2}F_1\!\!\left(\begin{matrix}a+1,c+1\\e+1\end{matrix}\right)+O(\epsilon)\right\}
\\
=
\frac{abc(b-f)}{f(b-a)(b-c)}{}_{2}\hat{F}_1\!\!\left(\left.\begin{matrix}a,c\\e\end{matrix}\right\vert b\right)
+\frac{abc(b-f)(1+a+c-e)}{ef(b-a)(b-c)}{}_{2}\hat{F}_1\!\!\left(\left.\begin{matrix}a+1,c+1\\e+1\end{matrix}\right\vert b+1\right)
\\
+\frac{f(b-c)(a^2+(a-b)(c-e+1))+ab^2(c-f)}{f(b-a)(b-c)}\frac{\Gamma(e)\Gamma(e-a-c-1)}{\Gamma(e-a)\Gamma(e-c)}+O(\epsilon).
\end{multline*}
Letting $\epsilon\to0$ we arrive at \eqref{eq:4F3to2F1hat}.
\end{proof}

Setting  $f=a$ in \eqref{eq:4F3to2F1hat} and replacing $a+1\to{a}$ we obtain
\begin{corollary}\label{cr:3F2-psi}
The following identity holds:
 \begin{multline}\label{eq:3F2-psi}
{}_{3}F_2\!\!\left(\begin{matrix}a,c,b\\e,b+1\end{matrix}\right)
\!=\!\frac{bc}{b-c}
\left\{{}_{2}\hat{F}_1\!\!\left(\left.\begin{matrix}a-1,c\\e\end{matrix}\right\vert b\right)
+\frac{a+c-e}{e}{}_{2}\hat{F}_1\!\!\left(\left.\begin{matrix}a,c+1\\e+1\end{matrix}\right\vert b+1\right)\!\right\}
\\
+\left(1+\frac{c^2}{(b-c)(e-a)}\right)\frac{\Gamma(e)\Gamma(e-a-c)}{\Gamma(e-a)\Gamma(e-c)}.
\end{multline}
\end{corollary}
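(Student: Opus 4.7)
\textbf{Proof plan for Corollary~\ref{cr:3F2-psi}.} The strategy is exactly the one announced in the statement: set $f=a$ in \eqref{eq:4F3to2F1hat} and then carry out the substitution $a+1\to a$ (equivalently, $a\to a-1$). On the left hand side, the factor $(a)_n$ appearing as a top parameter cancels with the $(a)_n$ appearing as a bottom parameter, so that the ${}_4F_3$ collapses to ${}_{3}F_2\!\left(\begin{matrix}a+1,b,c\\b+1,e\end{matrix}\right)$, which after the shift is the left hand side of \eqref{eq:3F2-psi}.

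On the right hand side of \eqref{eq:4F3to2F1hat}, the prefactor $abc(b-f)/[f(b-a)(b-c)]$ in front of the curly braces collapses to $bc/(b-c)$ at once, and the two $\psi$-series immediately become those appearing in \eqref{eq:3F2-psi} under $a\to a-1$. The genuine computation concerns the rational coefficient
\[
\frac{f(b-c)(a^2+(a-b)(c-e+1))+ab^2(c-f)}{f(b-a)(b-c)}
\]
multiplying the Gamma ratio. At $f=a$ the overall factor $a$ cancels; the plan is then to expand the numerator, collect the terms proportional to $a-b$ (using in particular $b^2-a^2=-(a-b)(a+b)$), and factor the result as $(a-b)\bigl[(a-e+1)(b-c)-c^2\bigr]$. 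The apparent singularity at $a=b$ is thus spurious, and the coefficient simplifies to $\bigl(e-a-1+c^2/(b-c)\bigr)\,\Gamma(e)\Gamma(e-a-c-1)/[\Gamma(e-a)\Gamma(e-c)]$.

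The final step is to execute the shift $a\to a-1$: this turns $\Gamma(e-a-c-1)$ into $\Gamma(e-a-c)$ and $\Gamma(e-a)$ into $(e-a)\Gamma(e-a)$. Absorbing the extra factor $1/(e-a)$ into the bracket converts $(e-a)+c^2/(b-c)$ into $1+c^2/[(b-c)(e-a)]$, which is precisely the coefficient claimed in \eqref{eq:3F2-psi}. The one non-routine observation in this derivation is spotting the factor $(a-b)$ inside the six-term polynomial numerator so that the cancellation with the $(b-a)$ in the denominator becomes manifest; everything else is straightforward bookkeeping with the Gamma function's functional equation.
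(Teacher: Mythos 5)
Your proposal is correct and follows exactly the route the paper itself takes: the paper obtains Corollary~\ref{cr:3F2-psi} by the same one-line recipe (set $f=a$ in \eqref{eq:4F3to2F1hat}, then replace $a+1\to a$), and your factorization of the numerator of the rational coefficient as $(a-b)\bigl[(b-c)(a-e+1)-c^2\bigr]$, together with the subsequent use of $\Gamma(e-a+1)=(e-a)\Gamma(e-a)$, checks out and reproduces the stated coefficient $1+c^2/[(b-c)(e-a)]$.
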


\paragraph{Acknowledgements.} We thank Asena \c{C}etinkaya for assistance in numerical verification of the results.

This work has been supported by the Ministry of Science and
Higher Education of the Russian Federation (agreement No. 075-02-2022-880). The second named author was also supported by RFBR (project 20-01-00018).


\begin{thebibliography}{99}
\bibitem{AAR} G.E.\:Andrews, R.\:Askey and R.\:Roy, Special functions, Cambridge University Press, 1999.

\bibitem{Bailey} W.N.\:Bailey, Generalized hypergeometric series, Stecherthafner Service Agency, New York and London, 1964. Reprinted from: Cambridge Tracts in Mathematics and Mathematical Physics, {\bf 32}, 1935.

\bibitem{Chen}K-W.Chen, Clausen's Series ${}_3F_2(1)$ with Integral Parameter Differences, Symmetry, 2021; 13(10):1783.

\bibitem{Chu94} W.\:Chu, Partial fractions and bilateral summations. J. Math. Phys. 35, 2036 (1994).

\bibitem{Chu95} W.\:Chu, Erratum: partial fractions and bilateral summations. J. Math. Phys. 36, 5198 (1995). 


\bibitem{Cvijovic}D.\:Cvijovi\'{c}, Closed-form summations of certain hypergeometric-type series containing the
digamma function, J. Phys. A: Math. Theor. 41 (2008) 455205.

\bibitem{EbisuIwasaki} A.\:Ebisu, K.\:Iwasaki,  Three-term relations for ${}_3F_2(1)$. J. Math. Anal. Appl. 2018, 463, 593--610.

\bibitem{Gasper} G.\:Gasper, Summation formulas for basic hypergeometric series. SIAM J. Math. Anal. 12, 196--200 (1981).

\bibitem{KrRao2004}C.\:Krattenthaler, K.\:Srinivasa Rao, On group theoretical aspects, hypergeometric transformations
and symmetries of angular momentum coefficients, Symmetries in Science XI (2005), 355--375, Kluwer Acad. Publ., Dordrecht, 2004.


\bibitem{KPITSF2018}D.B.\:Karp and E.G.\:Prilepkina, Extensions of Karlsson-Minton summation theorem and some consequences of the first Miller-Paris transformation, Integral Transforms and Special Functions, Vol. 29, Issue 12 (2018), 955-970.

\bibitem{KPREsults2019}D.B.Karp and E.G.Prilepkina, Degenerate Miller-Paris transformations, Results in Mathematics, (2019) 74:94.

\bibitem{KPSpringer2020}D.B.Karp and E.G.Prilepkina, Alternative approach to Miller-Paris transformations and their extensions, pp.117-140 in Transmutation Operators and Applications (edited by V.V.Kravchenko and S.M.Sitnik), Springer Trends in Mathematics Series, Birkhäuser, 2020.

\bibitem{KPG}D.B.\:Karp and E.G.\:Prilepkina, Transformations for hypergeometric ${}_4F_{3}$ with one unit shift: a group theoretic study, Mathematics, 2020, 8(11), 1966. DOI:10.3390/math8111966

\bibitem{Minton} B.M.\:Minton, Generalized hypergeometric functions at unit argument. J Math Phys. 1970; 12:1375--1376. 

\bibitem{Karlsson} P.W.Karlsson, Hypergeometric functions with integral parameter differences. J Math Phys. 1971;12:270--271.

\bibitem{RaoDobermann} K.S.\:Rao, H.D.\:Doebner, P.\:Natterman, Generalized hypergeometric series and the symmetries of
$3-j$ and $6-j$ coefficients/In: Kanemitsu S., Jia C. (eds) Number Theoretic Methods. Developments in Mathematics, vol 8. Springer, Boston, MA

\bibitem{Schlosser2001} M.\:Schlosser, Multilateral transformations of $q$-series with quotients of parameters that are nonnegative integral powers of $q$, in: $q$-Series with Applications to Combinatorics, Number Theory, and Physics, ed. by B.C. Berndt, K. Ono. American Mathematical Society Contemporary Mathematics, vol. 291 (2001), pp. 203--227.

\bibitem{Schlosser2002}M.\:Schlosser, Elementary derivations of identities for bilateral basic hypergeometric series. Sel. Math. 9:1 (2003), 119--159.

\bibitem{SS} M.A.\:Shpot,  H.M.\:Srivastava, The Clausenian hypergeometric function ${}_3F_2$ with unit argument and negative integral parameter differences. Appl. Math. Comput. 2015, 259, 819--827.


\bibitem{VMK}D.A.\:Varshalovich, A.N.\:Moskalev and V.K.\:Khersonskii, Quantum Theory of Angular Momentum World Scientific, 1988.

\end{thebibliography}
\end{document}